\documentclass[a4paper]{article}
\usepackage[english]{babel}
\usepackage[cp1251]{inputenc}
\pdfoutput=1
\usepackage[colorlinks,linkcolor=blue,citecolor=blue,urlcolor=blue]{hyperref}
\usepackage{latexsym,amsfonts,amssymb,amsthm,amsmath,amsbsy,graphicx}
\setlength{\textwidth}{145mm}\setlength{\textheight}{222mm}
\hoffset=-5mm\voffset=-3mm

\newtheorem{thm}{Theorem}[section]
\newtheorem{lem}{Lemma}[section]

\theoremstyle{definition}


\begin{document}
\thispagestyle{empty}
\bigskip
 {\noindent \Large\bf\sc  two-sided boundary functionals for kou
 process}\footnotemark[1]\footnotetext{This is an electronic reprint
  of the original article published in Visnyk
of Dnipropetrovs'k National University. This reprint is the translation
from Ukrainian and has minor structural changes.}

\bigskip
Ie.~Karnaukh
 \footnote{Department of Statistics and Probability Theory,
Dnipropetrovsk Oles Honchar National University,\\ 72, Gagarina pr.,
Dnipropetrovsk 49010, Ukraine.
 \href{mailto:ievgen.karnaukh@gmail.com}{ievgen.karnaukh@gmail.com}
 }
\begin{center}
\bigskip
\begin{quotation}
\noindent  {\small In this paper the moment generating function of two-sided boundary functionals for
double exponential jump diffusion processes are treated.}
\end{quotation}
\end{center}
\bigskip

Using the results of \cite{Kou2003,Husak2011engl} we investigate
the so called two-sided boundary functionals for a Kou process, which is a L\'evy
process with bounded variation positive and negative jumps that have
exponential distribution. To find the representation of moment
generating functions for the functionals we apply the factorization method
(see for instance \cite{Husak2011engl}), by which the solution
of the corresponding integral equation can be determined in terms of the
distributions of killed extrema. For another methods we refer the reader to
\cite{Kadankov2005engl, Bratiychuk1990} (for the method of successive iterations) and
 \cite{Bratijchuk1984engl} (for the resolvent method).

\section{Kou process and its approximating process}
Consider the stochastic process
\begin{equation}
\xi\left(t\right)=at+\sigma W\left(t\right)+S(t),\;\xi\left(0\right)=0,\; t\geq0,\label{eq:1.1}
\end{equation}
where $a$ is a finite constant, $\sigma>0$, $W(t)$ is a standard
Wiener process, $S(t)$ is a Poisson process with intensity $\lambda>0$ and
dual exponential jumps with density
\begin{equation}
f\left(x\right)=pce^{-cx}I_{\left\{ x\geq0\right\} }+qbe^{bx}I_{\left\{ x<0\right\} },\left(c,b,p,q>0,p+q=1\right).\label{eq:1.2}
\end{equation}
We call process (\ref{eq:1.1}) a Kou process.

Let $\theta_{s}$ be an exponential random variable with parameter
$s>0$ independent of $\xi(t)$, then the moment generating function
(m.g.f.) of a Kou process killed at rate $s$ is as follows
\[
\mathsf{E}e^{r\xi\left(\theta_{s}\right)}=\int_{0}^{\infty}se^{-st}\mathsf{E}e^{r\xi\left(t\right)}dt=\frac{s}{s-k(r)},\;\mathrm{Re}[r]=0,
\]
where $k\left(r\right)$ is the cumulant function of the
form
\begin{equation}
k(r)=ar+r^{2}\frac{\sigma^{2}}{2}+\lambda r\left(\frac{p}{c-r}-\frac{q}{b+r}\right).\label{eq:1.3}
\end{equation}

To find the distribution of a functional for a L\'evy process important
role plays the approximating method (see \cite{Bratiychuk1990} and references
therein). By this method, we construct a prelimit process and deduce
an integro-differential equation for the m.g.f. of the corresponding functional. Then solving
the equation and passing to the limit yields the solution for the initial
process.

The prelimit Kou process is given by
\[
\xi_{n}\left(t\right)=a_{n}t+S_{n}(t),
\]
where $a_{n}=a+3n\sigma^{2}/2,$ $S_{n}\left(t\right)$ is a compound
Poisson process with intensity $\lambda_{n}=\lambda p+3n^{2}\sigma^{2}+\lambda qe^{-b/n}$
and the density of jumps
\[
f_{n}\left(x\right)=\begin{cases}
p_{1}\left(n\right)ce^{-cx}, & x\geq0;\\
p_{2}\left(n\right)n, & -\frac{1}{n}\leq x<0;\\
p_{3}\left(n\right)be^{b\left(x+1/n\right)}, & x\leq-\frac{1}{n},
\end{cases}
\]
where $p_{1}\left(n\right)=\lambda p/\lambda_{n},p_{2n}\left(n\right)=3n^{2}\sigma^{2}/\lambda_{n},p_{3}\left(n\right)=e^{-b/n}\lambda q/\lambda_{n}$.
That is, the prelimit process is a Poisson process with positive
drift (possibly for large enough $n$), exponentially distributed
positive jumps and negative jumps that have a mixture of shifted exponential
and uniform distributions.

Denote the event $A_{n}\left(T\right)=$ $\left\{ \omega:\sup_{t\leq T}\left|\xi\left(t\right)-\xi_{n}\left(t\right)\right|>1/\sqrt{n}\right\} $,
then by Kolmogorov's inequality: $\mathsf{{P}}\left(A_{n}\left(T\right)\right)\leq Tn\frac{2e^{-b/n}}{b^{2}}\left(e^{b/n}-1-b/n-\frac{b^{2}}{2n^{2}}\right)\sim\frac{1}{n^{2}}$ (for more general case see \cite{Bratiychuk1990}).
Therefore, for any $T>0$ the series $\sum_{n\geq1}\mathsf{{P}}\left(A_{n}\left(T\right)\right)$
converges and using the Borel--Cantelli lemma we obtain $\mathsf{{P}}\left\{ \bigcup_{n\geq1}\bigcap_{k\geq n}A_{k}\left(T\right)\right\} =1$.
Thus
\[
\mathsf{{P}}\left\{ \bigcap_{T>0}\lim_{n\rightarrow\infty}\sup_{0\leq t\leq T}\left|\xi\left(t\right)-\xi_{n}\left(t\right)\right|=0\right\} =1.
\]

For the cumulant function of prelimit process we have
\begin{multline*}
k_{n}\left(r\right)=a_{n}r+\int_{-\infty}^{\infty}\left(e^{rx}-1\right)\lambda_{n}f_{n}\left(x\right)dx=ar+\lambda qe^{-b/n}\left(\frac{b}{b+r}e^{r/n}-1\right)+\\
+\lambda p\left(\frac{c}{c-r}-1\right)-\frac{3n^{3}\sigma^{2}}{r}\left(e^{-r/n}-1+r/n-\frac{1}{2}\left(r/n\right)^{2}\right)\underset{n\rightarrow\infty}{\longrightarrow}k\left(r\right).
\end{multline*}
By \cite{Gihman1975}, finite-dimensional distributions of the prelimit process tend to ones of the Kou process and for some constant $C$, which possibly depends
on $\epsilon>0$
\[
\overline{\lim}_{n\rightarrow\infty}\sup_{\left|t_{1}-t_{2}\right|\leq h}\mathsf{{P}}\left\{ \left|\xi_{n}\left(t_{1}\right)-\xi_{n}\left(t_{2}\right)\right|>\epsilon\right\} \leq Ch.
\]
Moreover, let $f_{T}\left(x\left(\cdot\right)\right)$ be
a functional determined on the Skorokhod space ${D}_{\left[0,T\right]}\left({R}\right)$.
If $f_{T}\left(x\left(\cdot\right)\right)$ almost everywhere continuous in the Skorokhod topology, then the distribution of $f_{T}\left(\xi_{n}\left(\cdot\right)\right)$
tends to the distribution of $f_{T}\left(\xi\left(\cdot\right)\right)$.
By \cite{Bratiychuk1990}, the class of such functionals
comprises supremum, infimum, overshoots and two-sided boundary functionals.

The basic functionals are the extrema of the process:
\[
\xi^{+}\left(t\right)=\sup_{u\leq t}\xi\left(u\right),\,\xi^{-}\left(t\right)=\inf_{u\leq t}\xi\left(u\right),
\]
for which the factorization identity
(the Spitzer-Rogozin identity) is hold
\[
\mathsf{E}e^{r\xi\left(\theta_{s}\right)}=
\mathsf{E}e^{r\xi^{+}\left(\theta_{s}\right)}\mathsf{E}e^{r\xi^{-}\left(\theta_{s}\right)},\mathrm{Re}[r]=0,
\]
where factors $\mathsf{E}e^{r\xi^{\pm}\left(\theta_{s}\right)}$ have the
analytic continuation into the half-plane $\pm\mathrm{Re}[r]\geq0$, respectively. The distribution of the other mentioned functionals can be represented
in terms of the distribution of extrema.

Following \cite{Kou2003}, the cumulant equation $k\left(r\right)=s$
for a Kou process $\xi\left(t\right)$ has exactly two positive roots $\rho_{1,2}\left(s\right):$
$\rho_{1}\left(s\right)<c<\rho_{2}\left(s\right)$ and two negative
roots $-r_{1,2}\left(s\right):$ $r_{1}\left(s\right)<b<r_{2}\left(s\right)$,
which define the destity functions of killed extrema and the m.g.f. of
overshoot functionals.
\begin{lem}{\cite{Kou2003}} \label{lem:1-1}
For a Kou process the density function
of killed supremum and infimum can be represented as the sum of exponents:
\begin{gather}
P'_{+}\left(s,x\right)=\frac{\partial}{\partial x}\mathsf{P}\left\{ \xi^{+}\left(\theta_{s}\right)<x\right\} =A_{1}^{+}e^{-\rho_{1}\left(s\right)x}+A_{2}^{+}e^{-\rho_{2}\left(s\right)x},x>0,\label{eq:2.5a}\\
P'_{-}\left(s,x\right)=\frac{\partial}{\partial x}\mathsf{P}\left\{ \xi^{-}\left(\theta_{s}\right)<x\right\} =A_{1}^{-}e^{r_{1}\left(s\right)x}+A_{2}^{-}e^{r_{2}\left(s\right)x},x<0,\label{eq:2.5b}
\end{gather}
where $A_{i}^{+}=(-1)^{i-1}\frac{c-\rho_{i}\left(s\right)}{c}\frac{\rho_{1}\left(s\right)\rho_{2}\left(s\right)}{\rho_{2}\left(s\right)-\rho_{1}\left(s\right)}$
and $A_{i}^{-}=(-1)^{i-1}\frac{b-r_{i}\left(s\right)}{b}\frac{r_{1}\left(s\right)r_{2}\left(s\right)}{r_{2}\left(s\right)-r_{1}\left(s\right)}$,
$i=1,2$.

The first passage time, $\tau^{+}\left(x\right)=\inf\left\{ t\geq0:\xi\left(t\right)>x\right\} ,x\geq0$,
and the overshoot, $\gamma^{+}\left(x\right)=\xi\left(\tau^{+}\left(x\right)\right)-x$,
are conditionally independent and the overshoot is conditionally memoryless
given that $\{\gamma^{+}\left(x\right)>0\}$:
\begin{gather}
\mathsf{E}\left[e^{-s\tau^{+}\left(x\right)},\gamma^{+}\left(x\right)=0,\tau^{+}\left(x\right)<\infty\right]=\frac{c-\rho_{1}\left(s\right)}{\rho_{2}\left(s\right)-\rho_{1}\left(s\right)}e^{-\rho_{1}\left(s\right)x}+\frac{\rho_{2}\left(s\right)-c}{\rho_{2}\left(s\right)-\rho_{1}\left(s\right)}e^{-\rho_{2}\left(s\right)x},\label{eq:2.6a}\\
\mathsf{E}\left[e^{-s\tau^{+}\left(x\right)-u\gamma^{+}\left(x\right)},\gamma^{+}\left(x\right)>0,\tau^{+}\left(x\right)<\infty\right]=\nonumber \\
=\frac{\left(c-\rho_{1}\left(s\right)\right)\left(\rho_{2}\left(s\right)-c\right)}{c\left(\rho_{2}\left(s\right)-\rho_{1}\left(s\right)\right)}\left(e^{-\rho_{1}\left(s\right)x}-e^{-\rho_{2}\left(s\right)x}\right)\frac{c}{c+u}.\label{eq:2.6b}
\end{gather}
\end{lem}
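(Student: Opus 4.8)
\medskip

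\noindent\emph{Proof strategy.} The plan is to deduce both parts from the Spitzer--Rogozin factorization recalled above, exploiting the exponential form of the jumps. For the first part, clearing denominators in \eqref{eq:1.3} shows that $(s-k(r))(c-r)(b+r)$ is a polynomial in $r$ of degree four with leading coefficient $\sigma^{2}/2$, whose four roots are exactly $\rho_{1},\rho_{2},-r_{1},-r_{2}$ by the root count recalled before the lemma; hence
\[
\frac{s}{s-k(r)}=\frac{2s}{\sigma^{2}}\,\frac{(c-r)(b+r)}{(\rho_{1}-r)(\rho_{2}-r)(r+r_{1})(r+r_{2})}.
\]
In the identity $\mathsf{E}e^{r\xi(\theta_{s})}=\mathsf{E}e^{r\xi^{+}(\theta_{s})}\,\mathsf{E}e^{r\xi^{-}(\theta_{s})}$ the first factor is analytic for $\mathrm{Re}[r]<\rho_{1}$ and tends to $0$ as $\mathrm{Re}[r]\to-\infty$ (there is no atom at $0$ since $\sigma>0$), and the second behaves symmetrically; by the uniqueness of the factorization together with the rationality of $s/(s-k(r))$, this forces
\[
\mathsf{E}e^{r\xi^{+}(\theta_{s})}=\frac{\rho_{1}\rho_{2}}{c}\,\frac{c-r}{(\rho_{1}-r)(\rho_{2}-r)},\qquad
\mathsf{E}e^{r\xi^{-}(\theta_{s})}=\frac{r_{1}r_{2}}{b}\,\frac{b+r}{(r+r_{1})(r+r_{2})},
\]
the constants being determined by the value $1$ at $r=0$ (the relation $\rho_{1}\rho_{2}r_{1}r_{2}=2s\,cb/\sigma^{2}$, obtained by putting $r=0$ in the quartic, makes this consistent). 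A partial-fraction decomposition in $r$ and termwise inversion via $(\rho_{i}-r)^{-1}=\int_{0}^{\infty}e^{rx}e^{-\rho_{i}x}\,dx$ then give \eqref{eq:2.5a}, and symmetrically \eqref{eq:2.5b}, with the stated coefficients $A_{i}^{\pm}$.

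For the boundary functionals, observe that $\{\xi^{+}(\theta_{s})>x\}=\{\tau^{+}(x)<\theta_{s}\}$ with $\theta_{s}$ an independent exponential time, so integrating \eqref{eq:2.5a} gives $\mathsf{E}[e^{-s\tau^{+}(x)},\tau^{+}(x)<\infty]=\int_{x}^{\infty}P'_{+}(s,y)\,dy$, a combination of $e^{-\rho_{1}x}$ and $e^{-\rho_{2}x}$. The overshoot is either $0$---because $\sigma>0$, the level $x$ may be reached continuously---or it is produced by one of the upward jumps of $S(t)$, which are exponential with parameter $c$; on the latter event the lack of memory of the exponential law makes $\gamma^{+}(x)$ exponential with parameter $c$ and independent of $\tau^{+}(x)$ and of the position just before the crossing jump, so that
\[
\mathsf{E}\!\left[e^{-s\tau^{+}(x)-u\gamma^{+}(x)},\gamma^{+}(x)>0,\tau^{+}(x)<\infty\right]=\mathsf{E}\!\left[e^{-s\tau^{+}(x)},\gamma^{+}(x)>0,\tau^{+}(x)<\infty\right]\frac{c}{c+u}.
\]
This already yields the conditional independence and the factor $c/(c+u)$ in \eqref{eq:2.6b}, and it remains only to split the total transform above into its creeping part \eqref{eq:2.6a} and its jumping part.

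To carry out this split I would read off from the first part that the ascending ladder height of $\xi$ is a killed subordinator with strictly positive drift (its creeping coefficient) and jump measure proportional to $c\,e^{-cx}\,dx$: since $\mathsf{E}e^{-q\xi^{+}(\theta_{s})}=\frac{\rho_{1}\rho_{2}}{c}\frac{c+q}{(\rho_{1}+q)(\rho_{2}+q)}$, its Laplace exponent is proportional to $(\rho_{1}+q)(\rho_{2}+q)/(c+q)$ and the associated $0$-potential has density a combination of $e^{-\rho_{1}x}$ and $e^{-\rho_{2}x}$. The creeping transform is then the drift times this potential density, the jumping transform is the potential convolved with the jump measure, and all constants are pinned down by two normalizations: \eqref{eq:2.6a} equals $1$ at $x=0$ (as $\tau^{+}(0)=0$ and $\gamma^{+}(0)=0$ almost surely when $\sigma>0$), and the sum of \eqref{eq:2.6a} and \eqref{eq:2.6b} at $u=0$ equals $\mathsf{P}(\xi^{+}(\theta_{s})>x)$; this produces \eqref{eq:2.6a}--\eqref{eq:2.6b}. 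Equivalently, one may write the integro-differential equation $(\mathcal{A}-s)V=0$ for $V(x)=\mathsf{E}[e^{-s\tau^{+}(x)-u\gamma^{+}(x)},\tau^{+}(x)<\infty]$, where $\mathcal{A}$ is the generator of $\xi$, note that its bounded solutions on $x>0$ are spanned by $e^{-\rho_{1}x}$ and $e^{-\rho_{2}x}$, fix the two coefficients from the behaviour near $x=0$, and let $u\to\infty$ to isolate the creeping term. The negative-side assertions follow by applying the same argument to $-\xi$.

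The routine parts here are the factorization algebra, the partial fractions, and the memorylessness argument. The delicate step is the clean separation of the creeping and jumping contributions with the correct constants: one must justify that the ladder-height subordinator has precisely the drift-plus-exponential-jump structure claimed and identify its potential density, or---in the integro-differential formulation---that the boundary behaviour at $x=0$ selects exactly the two positive roots $\rho_{1},\rho_{2}$. This requires care because $\rho_{2}>c$, so $e^{-\rho_{2}x}$ does not lie in the domain on which $\mathcal{A}$ acts termwise by $r\mapsto k(r)$, and the corresponding exponent must be recovered through the equation itself rather than by direct substitution.
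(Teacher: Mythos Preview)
The paper does not supply its own proof of this lemma: it is stated with the citation \cite{Kou2003} and used as input for the rest of the argument. So there is no in-paper proof to compare against; what you have written is an independent derivation.

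Your outline is sound. The factorization step is correct: clearing denominators does produce a quartic with the four stated roots, and the Wiener--Hopf split together with the normalisation at $r=0$ forces exactly the two rational factors you wrote down; the partial-fraction inversion then gives \eqref{eq:2.5a}--\eqref{eq:2.5b} with the right $A_i^\pm$. The memorylessness argument for the factor $c/(c+u)$ in \eqref{eq:2.6b} is also the standard and correct one. The only place that remains genuinely sketchy is the separation of the creeping and jumping contributions: you note yourself that two normalisations are needed to pin down four coefficients, and you lean on the ladder-height description (drift plus exponential jumps) to supply the missing structure. That is a valid route, but it deserves one more line: once you know $\mathsf{E}e^{-q\xi^{+}(\theta_s)}$ is the rational function you computed, the inverse ladder exponent $(\rho_1+q)(\rho_2+q)/(c+q)$ is linear plus a multiple of $c/(c+q)$, which identifies the drift and the exponential jump intensity explicitly, and then the creeping formula (drift times potential density) and the jump formula (potential convolved with $ce^{-cy}\,dy$) give \eqref{eq:2.6a}--\eqref{eq:2.6b} directly, without the circular-looking appeal to two normalisations.

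For comparison, Kou's own derivation in \cite{Kou2003} proceeds rather differently: he first establishes \eqref{eq:2.6a}--\eqref{eq:2.6b} by a martingale/optional-sampling argument applied to $e^{r\xi(t)-k(r)t}$ combined with the exponential-overshoot observation, and then obtains the law of $\xi^{+}(\theta_s)$ from that. Your route reverses the order, getting the extrema first from the rational Wiener--Hopf factorisation and then reading off the passage-time transforms; this is closer in spirit to the factorisation framework of \cite{Husak2011engl} that the present paper relies on, and it has the advantage of making the link between the roots $\rho_i,r_i$ and the densities $P'_{\pm}$ completely transparent.
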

Using formulas (\ref{eq:2.5a}) -- (\ref{eq:2.6b}) we can deduce
the relations for the m.g.f. of functionals connected with the
first exit time from a fixed interval.

\section{Two-sided boundary functionals }
Define the first exit time from the interval $\left(x-T,x\right)$
$\left(0<x<T\right)$ by the Kou process as
\[
\tau\left(x,T\right)=\inf\left\{ t\geq0:\xi\left(t\right)\notin\left(x-T,x\right)\right\} ,
\]
and assume that $\tau\left(x,T\right)=0$ for $x\notin(0,T)$. Introduce
the events of the exit from the interval through the
upper and lower bounds
\[
A_{+}\left(x\right)=\left\{ \omega:\xi\left(\tau\left(x,T\right)\right)\geq x\right\} ,\; A_{-}\left(x\right)=\left\{ \omega:\xi\left(\tau\left(x,T\right)\right)\leq x-T\right\} ,
\]
 and the m.g.f. of the exit time through the corresponding bound as
\[
Q^{T}\left(s,x\right)=\mathsf{{E}}\left[e^{-s\tau\left(x,T\right)},A_{+}\left(x\right)\right],\; Q_{T}\left(s,x\right)=\mathsf{{E}}\left[e^{-s\tau\left(x,T\right)},A_{-}\left(x\right)\right].
\]
The m.g.f. of exit time for the Kou process we derive as the limit of the
corresponding m.g.f. for the prelimit process. For simplicity, we
suppress the explicit dependence on $n$ in the notation of parameters
of the prelimit process.

\subsection{\label{sub:M.g.f. of the exit} M.g.f. for the moment of exit from
an interval}
Consider the next stochastic relation for $\tau\left(x,T\right)$
on $A_{+}\left(x\right)$. Let $\zeta,\eta$ denote the moment and value of the first jump of $\xi_{n}\left(t\right)$,
respectively (see Fig.~\ref{pic.prelim}), then
\[
\tau\left(x,T\right)\dot{=}\begin{cases}
x/a, & a\zeta>x;\\
\zeta+\tau\left(x-a\zeta-\eta,T\right), & a\zeta<x,\, x-T<a\zeta+\eta<x;\\
\zeta, & a\zeta<x,\, a\zeta+\eta\geq x.
\end{cases}
\]
\begin{figure}
\centering \includegraphics[scale=0.8]{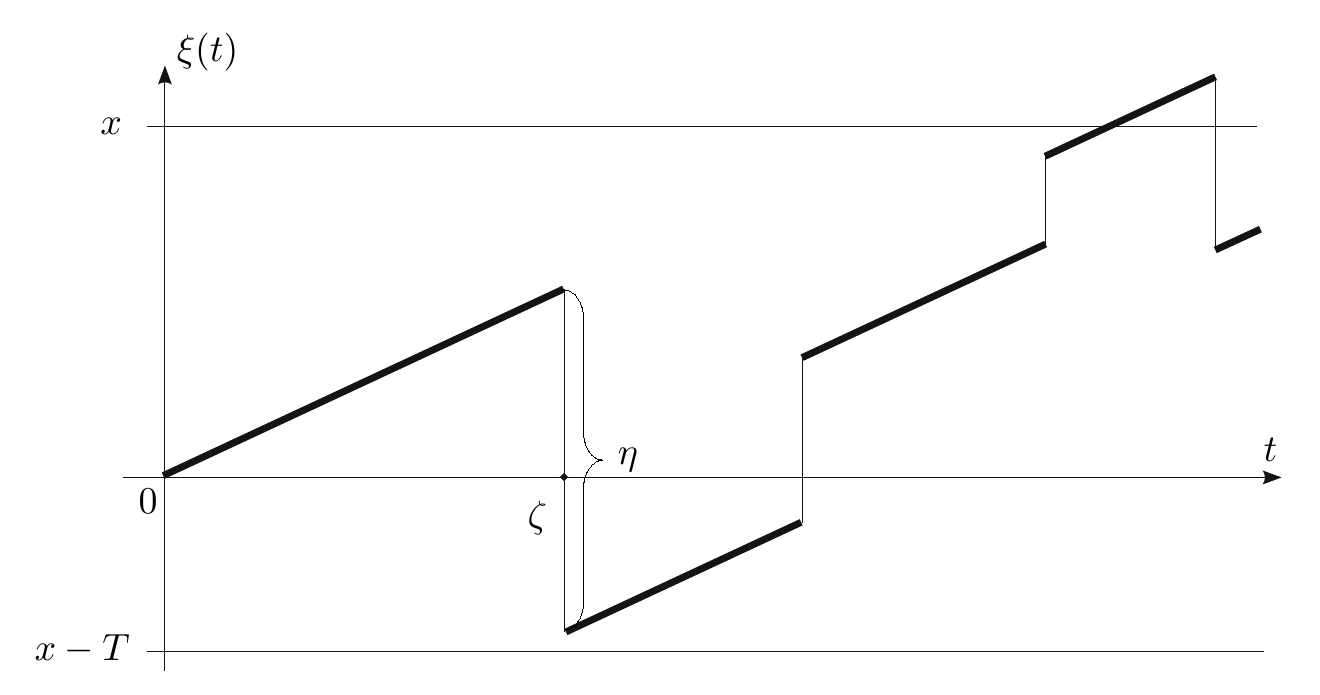} \caption{A path of the prelimit process}\label{pic.prelim}
\end{figure}

Using this stochastic relation and strong Markov property of $\xi_{n}\left(t\right)$
we deduce
\[
\begin{alignedat}{1}Q^{T}\left(s,x\right)= & \mathsf{{E}}\left[e^{-sx/a},a\zeta>x\right]+\mathsf{{E}}\left[e^{-s\left(\zeta+\tau\left(x-a\zeta-\eta,T\right)\right)},a\zeta<x,\, x-T<a\zeta+\eta<x\right]+\\
 & +\mathsf{{E}}\left[e^{-s\zeta},a\zeta<x,\, a\zeta+\eta\geq x\right].
\end{alignedat}
\]
Taking into account that $\zeta$ have exponential distribution with
parameter $\lambda$, and $\eta$ have density function $f\left(x\right)$
we obtain
\[
\begin{alignedat}{1}Q^{T}\left(s,x\right)= & e^{-\left(s+\lambda\right)\frac{x}{a}}+\int_{0}^{x/a}\lambda e^{-\left(s+\lambda\right)y}\int_{x-ay-T}^{x-ay}Q^{T}\left(s,x-ay-z\right)f\left(z\right)dzdy+\\
 & +\int_{0}^{x/a}\lambda e^{-\left(s+\lambda\right)y}\int_{x-ay}^{\infty}f\left(z\right)dzdy.
\end{alignedat}
\]
Combining this equation with boundary conditions
\[
Q^{T}\left(s,x\right)=\begin{cases}
1, & x\leq0,\\
0, & x\geq T,
\end{cases}
\]
yields
\[
Q^{T}\left(s,x\right)=e^{-\left(s+\lambda\right)\frac{x}{a}}+\frac{\lambda}{a}\int_{0}^{x}e^{-\left(s+\lambda\right)\frac{x-y}{a}}\int_{-\infty}^{\infty}Q^{T}\left(s,y-z\right)f\left(z\right)dzdy.
\]
Differentiating the last equation with respect to $x$ $\left(0<x<T\right)$ gives
\[
a\frac{\partial}{\partial x}Q^{T}\left(s,x\right)=-\left(s+\lambda\right)Q^{T}\left(s,x\right)+\lambda\int_{-\infty}^{\infty}Q^{T}\left(s,x-z\right)f\left(z\right)dz.
\]
Using the boundary conditions we can extend the last equation for $x\geq T$
as
\[
a\frac{\partial}{\partial x}Q^{T}\left(s,x\right)=-\left(s+\lambda\right)Q^{T}\left(s,x\right)+\lambda\int_{-\infty}^{\infty}Q^{T}\left(s,x-z\right)f\left(z\right)dzdy-C_{T}\left(x\right),
\]
where $C_{T}\left(x\right)=\lambda\int_{-\infty}^{\infty}Q^{T}\left(s,x-z\right)f\left(z\right)dzI_{\left\{ x\geq T\right\} }$
and $I_{\{A\}}$ is the indicator of the event $A$. Using the factorization
identity and projection operation $\left[\cdot\right]_{J}$, $J\subset\left(-\infty,\infty\right)$,
which is defined for an absolutely integrable function $g\left(x\right)$
as
\[
\left[\int_{-\infty}^{\infty}e^{rx}g\left(x\right)dx\right]_{J}=\int_{J}e^{rx}g\left(x\right)dx,\,\left[\int_{-\infty}^{\infty}e^{rx}g\left(x\right)dx+C\right]_{\pm}^{0}=\mp\int_{0}^{\pm\infty}e^{rx}g\left(x\right)dx+C,
\]
from the integro--differential equation we can deduce the integral
transform of m.g.f. of the exit time through upper bound.
\begin{lem}
\label{lem:1-2} The integral transform of m.g.f. $Q^{T}\left(s,x\right)$
for the prelimit Kou process has the representation
\begin{multline}
\int_{0}^{\infty}e^{rx}Q^{T}\left(s,x\right)dx=s^{-1}\mathsf{E}e^{r\xi^{+}\left(\theta_{s}\right)}\left[\mathsf{E}e^{r\xi^{-}\left(\theta_{s}\right)}a\left(1-e^{rT}Q^{T}\left(s,T-0\right)\right)\right]_{+}^{0}+\\
+s^{-1}\mathsf{E}e^{r\xi^{+}\left(\theta_{s}\right)}\left[\mathsf{E}e^{r\xi^{-}\left(\theta_{s}\right)}\int_{0}^{\infty}e^{rx}\left(\int_{-\infty}^{0}Q^{T}\left(s,z\right)\lambda f\left(x-z\right)dz-C_{T}\left(x\right)\right)dx\right]_{+}^{0}.\label{eq:3.1-1}
\end{multline}
\end{lem}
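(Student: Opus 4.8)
The plan is to run the factorization (Wiener--Hopf) scheme in exactly the form recalled just before the statement: transform the integro--differential equation for $Q^{T}(s,\cdot)$, recover the factor $s-k(r)$ in front of the unknown transform $\varphi(r):=\int_{0}^{\infty}e^{rx}Q^{T}(s,x)\,dx$, and then split the resulting scalar equation by means of $\frac{s}{s-k(r)}=\mathsf{E}e^{r\xi^{+}(\theta_{s})}\,\mathsf{E}e^{r\xi^{-}(\theta_{s})}$ and the projectors $[\cdot]_{\pm}^{0}$. Here $k$, $f$, $\lambda$, $a$ denote the prelimit quantities and $\hat f(r)=\int_{-\infty}^{\infty}e^{rx}f(x)\,dx$.

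The preliminary observation is that $Q^{T}(s,\cdot)$ is continuous at $0$ with $Q^{T}(s,0+)=1$ and has a single discontinuity, of size $Q^{T}(s,T-0)$, at $x=T$. The identity $Q^{T}(s,0+)=1$ follows because the prelimit drift is positive (for $n$ large enough): the process reaches the upper level by pure drift in time $x/a$ on the event that the first jump occurs after $x/a$, so $Q^{T}(s,x)\ge e^{-(s+\lambda)x/a}\to1$ as $x\downarrow0$, while $Q^{T}\le1$ always. Multiplying the extended equation by $e^{rx}$ with $\mathrm{Re}[r]=0$ and integrating over $(0,\infty)$, the drift term gives, after integration by parts, $a\bigl(e^{rT}Q^{T}(s,T-0)-1\bigr)-ar\varphi(r)$. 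In the convolution term I would change the variable $z\mapsto x-z$ and split the $z$--integral at $0$: the part where the argument of $Q^{T}$ is nonpositive produces exactly $\int_{0}^{\infty}e^{rx}\bigl(\int_{-\infty}^{0}Q^{T}(s,z)\lambda f(x-z)\,dz\bigr)dx$, while the part with nonnegative argument equals $\lambda\hat f(r)\varphi(r)$ minus the Fourier transform of its own restriction to $(-\infty,0)$, which I denote $\lambda\Theta_{-}(r)$ --- a transform of a function supported on $(-\infty,0]$. Collecting the terms and using $-ar+(s+\lambda)-\lambda\hat f(r)=s-k(r)$ leads to the scalar identity
\[
\varphi(r)\,(s-k(r))=a\bigl(1-e^{rT}Q^{T}(s,T-0)\bigr)+\Phi(r)-\lambda\Theta_{-}(r),\qquad\mathrm{Re}[r]=0,
\]
with $\Phi(r)=\int_{0}^{\infty}e^{rx}\bigl(\int_{-\infty}^{0}Q^{T}(s,z)\lambda f(x-z)\,dz-C_{T}(x)\bigr)dx$.

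For the Wiener--Hopf step I would rewrite this, using $s-k(r)=s\bigl(\mathsf{E}e^{r\xi^{+}(\theta_{s})}\,\mathsf{E}e^{r\xi^{-}(\theta_{s})}\bigr)^{-1}$, as
\[
s\,\frac{\varphi(r)}{\mathsf{E}e^{r\xi^{+}(\theta_{s})}}=\mathsf{E}e^{r\xi^{-}(\theta_{s})}\bigl(a(1-e^{rT}Q^{T}(s,T-0))+\Phi(r)\bigr)-\lambda\,\mathsf{E}e^{r\xi^{-}(\theta_{s})}\Theta_{-}(r),
\]
and apply $[\cdot]_{+}^{0}$ to both sides. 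The left-hand side is left unchanged by $[\cdot]_{+}^{0}$: $\varphi$ is the transform of a function supported on $[0,\infty)$, and $\mathsf{E}e^{r\xi^{+}(\theta_{s})}$ is analytic and zero-free on $\mathrm{Re}[r]\le0$ (its only zero lies in the right half-plane), so $\varphi/\mathsf{E}e^{r\xi^{+}(\theta_{s})}$ is again of that type up to an additive constant carried by the $0$--superscript. The last summand on the right is annihilated: $\mathsf{E}e^{r\xi^{-}(\theta_{s})}$, the m.g.f.\ of $\xi^{-}(\theta_{s})\le0$, and $\Theta_{-}$ are both transforms of objects supported on $(-\infty,0]$, hence so is their product, since the support of a convolution is contained in the sum of the supports. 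Linearity of $[\cdot]_{+}^{0}$ then splits the remaining product into the two projector terms, and multiplying back by $s^{-1}\mathsf{E}e^{r\xi^{+}(\theta_{s})}$ yields precisely \eqref{eq:3.1-1}.

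The step that I expect to need most care is the bookkeeping in the second paragraph --- isolating exactly the part of the transformed convolution supported on the negative half-line, so that it, and nothing else, is removed by the projector, and keeping the signs of the boundary contributions at $0$ and at $T$ straight. The other delicate point is justifying that $[\cdot]_{+}^{0}$ behaves on each summand as claimed, i.e.\ the half-plane analyticity and growth of $\varphi(r)/\mathsf{E}e^{r\xi^{+}(\theta_{s})}$, of $\mathsf{E}e^{r\xi^{-}(\theta_{s})}\Phi(r)$ and of $\mathsf{E}e^{r\xi^{-}(\theta_{s})}\Theta_{-}(r)$; this is exactly the content of the factorization framework cited from \cite{Husak2011engl}, which rests on the analytic continuation of the extrema transforms across $\mathrm{Re}[r]=0$ and on $\mathsf{E}e^{r\xi^{+}(\theta_{s})}$ having no zeros in the closed left half-plane.
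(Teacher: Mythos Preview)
Your proposal is correct and follows essentially the same route as the paper: transform the extended integro--differential equation on $(0,\infty)$, use the boundary contributions at $0$ and $T-0$ together with the split of the convolution to obtain the scalar identity $(s-k(r))\varphi(r)=a(1-e^{rT}Q^{T}(s,T-0))+\Phi(r)-\lambda\Theta_{-}(r)$, then apply the factorization $s/(s-k(r))=\mathsf{E}e^{r\xi^{+}(\theta_{s})}\mathsf{E}e^{r\xi^{-}(\theta_{s})}$ and the projector $[\cdot]_{+}^{0}$, observing that the term $\mathsf{E}e^{r\xi^{-}(\theta_{s})}\Theta_{-}(r)$ is the transform of a convolution supported on $(-\infty,0)$ and hence killed by the projector. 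The paper's proof is precisely this computation, written out slightly more tersely and without your additional remarks on $Q^{T}(s,0+)=1$ and on the half-plane analyticity of the factors.
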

\begin{proof} Since function $Q^{T}\left(s,x\right)$ has jump at
point $T$: $Q^{T}\!\left(s,T-0\right)$ $\neq Q^{T}\left(s,T\right)=0$,
we see that
\begin{gather*}
-ar\int_{0}^{\infty}e^{rx}Q^{T}\left(s,x\right)dx=a\left(1-e^{rT}Q^{T}\left(s,T-0\right)\right)+\int_{0}^{\infty}e^{rx}a\frac{\partial}{\partial x}Q^{T}\left(s,x\right)dx,\\
\int_{0}^{\infty}e^{rx}a\frac{\partial}{\partial x}Q^{T}\left(s,x\right)dx=-\left(s+\lambda\right)\int_{0}^{\infty}e^{rx}Q^{T}\left(s,x\right)dx+\\
+\int_{0}^{\infty}e^{rx}\int_{-\infty}^{\infty}Q^{T}\left(s,x-z\right)\lambda f\left(z\right)dzdx-\int_{0}^{\infty}e^{rx}C_{T}\left(x\right)dx.
\end{gather*}
Whence
\begin{multline*}
\left(s-k\left(r\right)\right)\int_{0}^{\infty}e^{rx}Q^{T}\left(s,x\right)dx=a\left(1-e^{rT}Q^{T}\left(s,T-0\right)\right)-\int_{0}^{\infty}e^{rx}C_{T}\left(x\right)dx+\\
+\int_{0}^{\infty}e^{rx}\int_{-\infty}^{0}Q^{T}\left(s,z\right)\lambda f\left(x-z\right)dzdx-\int_{-\infty}^{0}e^{rx}\int_{0}^{\infty}Q^{T}\left(s,z\right)\lambda f\left(x-z\right)dzdx,
\end{multline*}

Utilizing now the factorization identity and the projection operation yields
the equality
\begin{multline*}
s\int_{0}^{\infty}e^{rx}Q^{T}\left(s,x\right)dx=\mathsf{E}e^{r\xi^{+}\left(\theta_{s}\right)}\left[\mathsf{E}e^{r\xi^{-}\left(\theta_{s}\right)}a\left(1-e^{rT}Q^{T}\left(s,T-0\right)\right)\right]_{+}^{0}-\\
-\mathsf{E}e^{r\xi^{+}\left(\theta_{s}\right)}\left[\mathsf{E}e^{r\xi^{-}\left(\theta_{s}\right)}\int_{-\infty}^{0}e^{rx}\int_{0}^{\infty}Q^{T}\left(s,z\right)\lambda f\left(x-z\right)dzdx\right]_{+}^{0}+\\
+\mathsf{E}e^{r\xi^{+}\left(\theta_{s}\right)}\left[\mathsf{E}e^{r\xi^{-}\left(\theta_{s}\right)}\int_{0}^{\infty}e^{rx}\left(\int_{-\infty}^{0}Q^{T}\left(s,z\right)\lambda f\left(x-z\right)dz-C_{T}\left(x\right)\right)dx\right]_{+}^{0}.
\end{multline*}
The second term is zero and we obtain (\ref{eq:3.1-1}).
\end{proof}

Inverting (\ref{eq:3.1-1}) with respect to $r$ gives
\begin{multline}
sQ^{T}\left(s,x\right)=sP'_{+}\left(s,x\right)C_{+}\left(s\right)-aQ^{T}\left(s,T-0\right)\times\\
\times\left(\int_{-T}^{\min\left\{ x-T,0\right\} }P'_{+}\left(s,x-y-T\right)P'_{-}\left(s,y\right)dy+p_{-}\left(s\right)P'_{+}\left(s,x-T\right)I_{\{x\geq T\}}\right)+\\
+\int_{0}^{x}\int_{-\infty}^{0}\int_{-\infty}^{0}Q^{T}\left(s,u\right)\lambda f\left(x-y-z-u\right)dudP_{-}\left(s,z\right)dP_{+}\left(s,y\right)-\\
-\int_{0}^{x}\int_{-\infty}^{0}C_{T}\left(x-y-z\right)dP_{-}\left(s,z\right)dP_{+}\left(s,y\right).\label{eq:3.1}
\end{multline}

Observe, that for $u\leq0:$ $Q^{T}\left(s,u\right)=1$, and for $u>0:$
$\lambda f\left(u\right)=\lambda pce^{-cu}$, hence
\begin{gather*}
\int_{-\infty}^{0}Q^{T}\left(s,u\right)\lambda f\left(x-y-z-u\right)du=\int_{-\infty}^{0}\lambda f\left(x-y-z-u\right)du=\lambda pe^{-c\left(x-y-z\right)},\\
C_{T}\left(x-y-z\right)=\lambda pe^{-c\left(x-y-z\right)}\left(\int_{0}^{T}Q^{T}\left(s,u\right)ce^{cu}du+1\right)I_{\left\{ x-y-z\geq T\right\} }.
\end{gather*}
Denote $C_{0}\left(T\right)=\int_{0}^{T}Q^{T}\left(s,u\right)ce^{cu}du+1$
and $C_{1}\left(T\right)=s^{-1}aQ^{T}\left(s,T-0\right)$, then from
(\ref{eq:3.1}) it follows that
\begin{multline}
Q^{T}\left(s,x\right)=P\left\{ \xi^{+}\left(\theta_{s}\right)\geq x\right\} -C_{1}\left(T\right)\int_{-T}^{\min\left\{ x-T,0\right\} }P'_{+}\left(s,x-y-T\right)dP_{-}\left(s,y\right)-\\
-C_{0}\left(T\right)\int_{0}^{x}\int_{-\infty}^{x-y-T}s^{-1}\lambda pe^{-c\left(x-y-z\right)}dP_{-}\left(s,z\right)dP_{+}\left(s,y\right).\label{eq:3.2}
\end{multline}
Formula (\ref{eq:3.2}) determines the m.g.f. of exit time through
the upper bound for the prelimit Kou process.
\begin{thm}
\label{thm:1} For a Kou process the m.g.f. of exit time from the interval
$\left(x-T,x\right)$, $0\leq x\leq T$ through upper bound has the
representation
\begin{multline}
Q^{T}\left(s,x\right)=P\left\{ \xi^{+}\left(\theta_{s}\right)\geq x\right\} -C_{1}\left(T\right)\int_{-T}^{x-T}P'_{+}\left(s,x-y-T\right)P'_{-}\left(s,y\right)dy-\\
-C_{0}\left(T\right)\int_{0}^{x}\int_{-\infty}^{x-y-T}s^{-1}\lambda pe^{-c\left(x-y-z\right)}P'_{-}\left(s,z\right)P'_{+}\left(s,y\right)dzdy.\label{eq:3.3}
\end{multline}
where densities $P'_{\pm}\left(s,x\right)$ are defined by (\ref{eq:2.5a})
-- (\ref{eq:2.5b}), $C_{0}\left(T\right)$ and $C_{1}\left(T\right)$
satisfy equations $C_{0}\left(T\right)=\int_{0}^{T}Q^{T}\left(s,u\right)ce^{cu}du+1$
and $Q^{T}\left(s,T\right)=0$.

For the m.g.f. of exit time through the lower bound the next formula holds
\begin{multline}
Q_{T}\left(s,x\right)=P\left\{ \xi^{-}\left(\theta_{s}\right)\leq x-T\right\} -C^{1}\left(T\right)\int_{x}^{T}P'_{-}\left(s,x-y\right)P'_{+}\left(s,y\right)dy-\\
-C^{0}\left(T\right)\int_{x-T}^{0}\int_{x-y}^{\infty}s^{-1}\lambda qe^{b\left(x-y-z-T\right)}P'_{+}\left(s,z\right)P'_{-}\left(s,y\right)dzdy.\label{eq:3.3a}
\end{multline}
where $C^{0}\left(T\right)$, $C^{1}\left(T\right)$ obey equations
$C^{0}\left(T\right)=\int_{0}^{T}Q_{T}\left(s,u\right)be^{-b\left(u-T\right)}du+1$
and $Q_{T}\left(s,0\right)=0$.
\end{thm}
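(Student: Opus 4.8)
The approach is to obtain (\ref{eq:3.3}) by passing to the limit $n\to\infty$ in formula (\ref{eq:3.2}), which is already established for the prelimit process. For $0\le x<T$ the prelimit formula has exactly the form of (\ref{eq:3.3}), with the Kou quantities replaced by their prelimit counterparts and with the $dP_\pm(s,\cdot)$ understood as Lebesgue--Stieltjes measures. The only structural difference is the atom of the prelimit killed infimum at the origin (present because $\xi_n$ has strictly positive drift $a_n$), but $\{0\}$ sits at an endpoint of every range of integration occurring in (\ref{eq:3.2}) when $x<T$, so it contributes nothing and the measures may be replaced by their absolutely continuous parts.

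Then I would pass to the limit term by term, using Section~1: $\xi_n\to\xi$ a.s.\ uniformly on compacts, and the two-sided boundary and extrema functionals are almost everywhere continuous in the Skorokhod topology, so $Q^T_n(s,x)\to Q^T(s,x)$ for $x\in(0,T)$ (bounded convergence, $0\le Q^T_n\le1$) and $\xi_n^\pm(\theta_s)\Rightarrow\xi^\pm(\theta_s)$; since the roots of $k_n(r)=s$ converge to $\rho_{1,2}(s)$ and $-r_{1,2}(s)$, the prelimit killed-extrema densities converge to the exponential sums of Lemma~\ref{lem:1-1}. The constant $C_0(T)$ passes to $\int_0^T Q^T(s,u)ce^{cu}du+1$ by bounded convergence, and the two double integrals in (\ref{eq:3.2}) converge by dominated convergence, their integrands being uniformly bounded and, on the infinite range, exponentially dominated.

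The delicate point is the constant $C_1(T)=s^{-1}a_nQ^T_n(s,T-0)$: here $a_n=a+3n\sigma^2/2\to\infty$ while $Q^T_n(s,T-0)\to0$ (the limit process has a Gaussian component, so started next to the lower boundary it exits downwards first with probability tending to one), an $\infty\cdot0$ indeterminacy that should not be evaluated head-on. Instead, having shown that the left-hand side and every other term of (\ref{eq:3.2}) converge to finite limits, and that the coefficient $\int_{-T}^{x-T}P'_+(s,x-y-T)P'_-(s,y)dy$ of $C_1(T)$ tends to a strictly positive function of $x$ on $(0,T)$, one concludes that $C_1(T)$ itself converges to a finite limit; the limit of (\ref{eq:3.2}) is then (\ref{eq:3.3}), which extends to $x=0$ and $x=T$ by continuity. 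To identify the two constants: evaluating (\ref{eq:3.3}) at $x=T$ and using $Q^T(s,T)=0$ (a path started at the left endpoint of $(0,T)$ has exited through the lower bound) gives one linear relation, while substituting (\ref{eq:3.3}) into $C_0(T)=\int_0^T Q^T(s,u)ce^{cu}du+1$ and carrying out the elementary integrals gives a second; this $2\times2$ system is non-degenerate, so its solution is the pair $(C_0(T),C_1(T))$ of the statement.

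Finally, (\ref{eq:3.3a}) follows from (\ref{eq:3.3}) applied to the reflected process $-\xi$, which is again a Kou process with $(a,\sigma,\lambda,p,q,c,b)$ replaced by $(-a,\sigma,\lambda,q,p,b,c)$: exit of $\xi$ from $(x-T,x)$ through the lower bound is exit of $-\xi$ from $(-x,T-x)$ through the upper bound, the identities $(-\xi)^\pm(\theta_s)=-\xi^\mp(\theta_s)$ interchange $\rho_{1,2}(s)$ with $r_{1,2}(s)$ and $P'_+$ with $P'_-$ (after $x\mapsto-x$), and the substitution $x\mapsto T-x$ turns (\ref{eq:3.3}) into (\ref{eq:3.3a}) with $C^0(T),C^1(T)$ the images of $C_0(T),C_1(T)$. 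The main obstacle is the $\infty\cdot0$ behaviour of $C_1(T)$, handled by the indirect convergence argument above; the rest is the routine bookkeeping of checking that the prelimit extrema densities and auxiliary constants converge to the claimed Kou-process objects.
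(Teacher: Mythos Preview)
Your approach is essentially that of the paper: pass to the limit $n\to\infty$ in (\ref{eq:3.2}) using the convergence of the two-sided functional and the killed-extrema distributions established in Section~1 (together with $\mathsf{P}\{\xi_n^-(\theta_s)=0\}\to0$ to dispose of the atom), then obtain (\ref{eq:3.3a}) by duality $Q_T(s,x)=Q_1^T(s,T-x)$ for the reflected process $-\xi$. The paper's proof simply asserts $C_1^n(T)\to C_1(T)$ without confronting the $\infty\cdot0$ indeterminacy you correctly flag; your indirect argument---solving for $C_1^n(T)$ from (\ref{eq:3.2}) at a fixed $x\in(0,T)$ where the coefficient integral is strictly positive and then passing to the limit---in fact fills a gap the paper leaves open.
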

\begin{proof} Taking into account that as $n\rightarrow\infty$
the distribution of killed extrema and the distribution of exit
time from the interval for the prelimit process tend to the distributions
of the corresponding functionals of Kou process: $P_{\pm}^{n}\left(s,x\right)\rightarrow P{}_{\pm}\left(s,x\right)$,
$Q_{n}^{T}\left(s,x\right)\rightarrow Q^{T}\left(s,x\right)$, we
have that $C_{0}^{n}\left(T\right)\rightarrow C_{0}\left(T\right)$
and $C_{1}^{n}\left(T\right)\rightarrow C_{1}\left(T\right)$. Moreover,
$\mathsf{P}\left\{ \xi_{n}^{-}(\theta_{s})=0\right\} \rightarrow0$
as $n\rightarrow\infty$ gives (\ref{eq:3.3}) when combined with (\ref{eq:3.2}).

To derive the m.g.f. for the exit time through the lower bound we use
the fact that $Q_{T}\left(s,x\right)=Q_{1}^{T}\left(s,T-x\right)$,
where $Q_{1}^{T}\left(s,x\right)$ is the m.g.f. of exit time
through the upper bound for the dual process $\xi_{1}\left(t\right)=-\xi\left(t\right)$.
\end{proof}
Denote the integrals in (\ref{eq:3.3}) by
\begin{gather*}
J_{1}\left(s,x,T\right)=\int_{-T}^{x-T}P'_{+}\left(s,x-y-T\right)P'_{-}\left(s,y\right)dy,\\
J_{2}\left(s,x,T\right)=\int_{0}^{x}\int_{-\infty}^{x-y-T}s^{-1}\lambda pe^{-c\left(x-y-z\right)}P'_{-}\left(s,z\right)P'_{+}\left(s,y\right)dzdy,
\end{gather*}
then
\[
Q^{T}\left(s,x\right)=P\left\{ \xi^{+}\left(\theta_{s}\right)\geq x\right\} -C_{1}\left(T\right)J_{1}\left(s,x,T\right)-C_{0}\left(T\right)J_{2}\left(s,x,T\right).
\]
To find $C_{0}\left(T\right)$ and $C_{1}\left(T\right)$ use the boundary
conditions for $Q^{T}\left(s,x\right)$: $Q^{T}\left(s,T\right)=0$
and $\int_{0}^{T}Q^{T}\left(s,u\right)ce^{cu}du+1=C_{0}\left(T\right)$. Then
\[
C_{0}\left(T\right)=\frac{\left(1+\tilde{J}_{0}\right)J_{1}-\tilde{J}_{1}J_{0}}{J_{1}\left(1+\tilde{J}_{2}\right)-J_{2}\tilde{J}_{1}},C_{1}\left(T\right)=\frac{\left(1+\tilde{J}_{2}\right)J_{0}-J_{2}\left(1+\tilde{J}_{0}\right)}{J_{1}\left(1+\tilde{J}_{2}\right)-J_{2}\tilde{J}_{1}},
\]
where $J_{0}=P\left\{ \xi^{+}\left(\theta_{s}\right)\geq T\right\} ,$
$J_{1}=J_{1}\left(s,T,T\right)$, $J_{2}=J_{2}\left(s,T,T\right)$,
$\tilde{J}_{0}=\int_{0}^{T}\overline{P}_{+}\left(s,u\right)ce^{cu}du$,
$\tilde{J}_{1,2}=\int_{0}^{T}J_{1,2}\left(s,u,T\right)ce^{cu}du$.

\section{The joint distribution of two-boundary functionals }
We include into consideration the value of overshoot through a
boundary at the exit moment from the interval by a Kou process:
\[
\gamma_{T}\left(x\right)=(\xi\left(\tau\left(x,T\right)\right)-x)I_{A_{+}(x)}+(x-T-\xi(\tau(x,T))I_{A_{-}(x)}.
\]
Appling similar arguments as in Section \ref{sub:M.g.f. of the exit}
we can establish the following integral equation for the joint m.g.f.
of $\left\{ \tau\left(x,T\right),\gamma_{T}\left(x\right)\right\} $
$\left(\mathrm{Im}(\alpha)=0\right)$
\begin{multline}
\mathsf{E}\left[e^{-s\tau\left(x,T\right)+i\alpha\gamma_{T}\left(x\right)},A_{+}\left(x\right)\right]=\mathsf{E}\left[e^{-s\tau^{+}\left(x\right)+i\alpha\gamma^{+}\left(x\right)},\tau^{+}\left(x\right)<\infty\right]-\\
-C_{1}\left(T,\alpha\right)J_{1}(s,x,T)-C_{0}\left(T,\alpha\right)J_{2}(s,x,T),\label{eq:3.4}
\end{multline}
where $C_{0}\left(T,\alpha\right)$ and $C_{1}\left(T,\alpha\right)$
obey equations $\mathsf{E}\left[e^{-s\tau\left(T,T\right)+i\alpha\gamma_{T}\left(T\right)},A_{+}\left(T\right)\right]=0$
and $C_{0}\left(T,\alpha\right)=\int_{0}^{T}\mathsf{E}\left[e^{-s\tau\left(u,T\right)+i\alpha\gamma_{T}\left(u\right)},A_{+}\left(u\right)\right]ce^{cu}du+c(c-i\alpha)^{-1}$.

Denote
\begin{gather*}
V_{0}=\mathsf{E}\!\left[e^{-s\tau^{+}\left(T\right)},\gamma^{+}\left(T\right)=0,\tau^{+}\left(T\right)<\infty\right]\!,V_{>}=\mathsf{E}\!\left[e^{-s\tau^{+}\left(T\right)},\gamma^{+}\left(T\right)>0,\tau^{+}\left(T\right)<\infty\right]\!,\\
\tilde{V}_{0}=\int_{0}^{T}\mathsf{E}\left[e^{-s\tau^{+}\left(u\right)},\gamma^{+}\left(u\right)=0,\tau^{+}\left(u\right)<\infty\right]ce^{cu}du,\\
\begin{gathered}\tilde{V}_{>0}=\int_{0}^{T}\mathsf{E}\left[e^{-s\tau^{+}\left(u\right)},\gamma^{+}\left(u\right)>0,\tau^{+}\left(u\right)<\infty\right]ce^{cu}du,\end{gathered}
\end{gather*}
then from (\ref{eq:3.4}) the matrix form for the joint m.g.f. follows
\begin{multline*}
\mathsf{E}\left[e^{-s\tau\left(x,T\right)+i\alpha\gamma_{T}\left(x\right)},A_{+}\left(x\right)\right]=\mathsf{E}\left[e^{-s\tau^{+}\left(x\right)},\gamma^{+}\left(x\right)=0,\tau^{+}\left(x\right)<\infty\right]+\\
+\mathsf{E}\left[e^{-s\tau^{+}\left(x\right)},\gamma^{+}\left(x\right)>0,\tau^{+}\left(x\right)<\infty\right]\frac{c}{c-i\alpha}-\\
-\left(J_{1}\left(s,x,T\right);J_{2}\left(s,x,T\right)\right)\begin{pmatrix}J_{1} & J_{2}\\
\tilde{J}_{1} & 1+\tilde{J}_{2}
\end{pmatrix}^{-1}\begin{pmatrix}V_{0}+\frac{c}{c-i\alpha}V_{>}\\
\tilde{V}_{0}+\frac{c}{c-i\alpha}\tilde{V}_{>}
\end{pmatrix}.
\end{multline*}
We thus get the next assertion.
\begin{thm}
\label{thm:2} For the joint m.g.f. of the first exit time from
$\left(x-T,x\right)$, $0\leq x\leq T$ by a Kou process through the upper
bound and the corresponding overshoot the following equalities are
hold
\begin{multline}
\mathsf{E}\left[e^{-s\tau\left(x,T\right)},\gamma_{T}\left(x\right)=0,A_{+}\left(x\right)\right]=\mathsf{E}\left[e^{-s\tau^{+}\left(x\right)},\gamma^{+}\left(x\right)=0,\tau^{+}\left(x\right)<\infty\right]-\\
-\left(J_{1}\left(s,x,T\right);J_{2}\left(s,x,T\right)\right)\begin{pmatrix}J_{1} & J_{2}\\
\tilde{J}_{1} & 1+\tilde{J}_{2}
\end{pmatrix}^{-1}\begin{pmatrix}V_{0}\\
\tilde{V}_{0}
\end{pmatrix}\label{eq:3.5a}
\end{multline}
and
\begin{multline}
\mathsf{E}\!\left[e^{-s\tau\left(x,T\right)+i\alpha\gamma_{T}\left(x\right)},\gamma_{T}\left(x\right)>0,A_{+}\left(x\right)\right]\!=\!\frac{c}{c-i\alpha}\!\left(\!\mathsf{E}\!\left[e^{-s\tau^{+}\left(x\right)},\gamma^{+}\left(x\right)>0,\tau^{+}\left(x\right)<\infty\right]\!-\right.\\
-\left.\left(J_{1}\left(s,x,T\right);J_{2}\left(s,x,T\right)\right)\begin{pmatrix}J_{1} & J_{2}\\
\tilde{J}_{1} & 1+\tilde{J}_{2}
\end{pmatrix}^{-1}\begin{pmatrix}V_{>}\\
\tilde{V}_{>}
\end{pmatrix}\right).\label{eq:3.5b}
\end{multline}
\end{thm}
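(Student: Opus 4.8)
The plan is to take the matrix identity displayed immediately before the statement and separate it into the part that does not depend on $\alpha$ and the part proportional to $c/(c-i\alpha)$, and then to identify these two pieces with the contributions of $\{\gamma_T(x)=0\}$ and $\{\gamma_T(x)>0\}$ to the joint m.g.f.\ on the left of \eqref{eq:3.4}. Two facts are needed: a conditional lack-of-memory property for the upward overshoot of the two-sided problem, and the linear independence of the two functions $\alpha\mapsto 1$ and $\alpha\mapsto c/(c-i\alpha)$.

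For the first fact I would reproduce the argument behind Lemma~\ref{lem:1-1} in the presence of the lower barrier. Working with the prelimit process $\xi_n$ and conditioning on the first jump that carries it above the running level, that jump is conditionally exponential with parameter $c$ on the positive half-line, so by memorylessness the amount by which the level is overshot is again exponential with parameter $c$, uniformly over the pre-jump position inside $(x-T,x)$; the barrier $x-T$ is irrelevant to this residual, and $\sigma>0$ guarantees there is no atom on the upper boundary. Passing to the limit gives $\mathsf{E}[e^{-s\tau(x,T)+i\alpha\gamma_T(x)},\gamma_T(x)>0,A_+(x)]=\frac{c}{c-i\alpha}\,\mathsf{E}[e^{-s\tau(x,T)},\gamma_T(x)>0,A_+(x)]$, and since $e^{i\alpha\gamma_T(x)}=1$ on $\{\gamma_T(x)=0\}$ (where $\gamma_T(x)\ge 0$ automatically on $A_+(x)$), the left-hand side of \eqref{eq:3.4} equals $\mathsf{E}[e^{-s\tau(x,T)},\gamma_T(x)=0,A_+(x)]+\frac{c}{c-i\alpha}\,\mathsf{E}[e^{-s\tau(x,T)},\gamma_T(x)>0,A_+(x)]$.

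On the right-hand side of the matrix identity every ingredient except the factor $c/(c-i\alpha)$ is $\alpha$-free: the matrix $\begin{pmatrix}J_1&J_2\\\tilde J_1&1+\tilde J_2\end{pmatrix}$ and the row $(J_1(s,x,T);J_2(s,x,T))$ contain no $\alpha$; by Lemma~\ref{lem:1-1} the free term $\mathsf{E}[e^{-s\tau^+(x)+i\alpha\gamma^+(x)},\tau^+(x)<\infty]$ equals the expectation restricted to $\{\gamma^+(x)=0\}$ plus $\frac{c}{c-i\alpha}$ times the expectation restricted to $\{\gamma^+(x)>0\}$; and the column $\bigl(V_0+\frac{c}{c-i\alpha}V_>,\ \tilde V_0+\frac{c}{c-i\alpha}\tilde V_>\bigr)^{\!\top}$ is affine in $c/(c-i\alpha)$, the term $c(c-i\alpha)^{-1}$ in the defining relation for $C_0(T,\alpha)$ being itself the m.g.f.\ of an exponential-$c$ variable. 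Hence the right-hand side has the form $a(s,x,T)+\frac{c}{c-i\alpha}\,b(s,x,T)$ with $a,b$ independent of $\alpha$. Since $1$ and $\alpha\mapsto c/(c-i\alpha)$ are linearly independent (compare the value at $\alpha=0$ with the limit $\alpha\to\infty$), matching with the decomposition of the left-hand side forces $\mathsf{E}[e^{-s\tau(x,T)},\gamma_T(x)=0,A_+(x)]=a(s,x,T)$, which is \eqref{eq:3.5a}, and $\mathsf{E}[e^{-s\tau(x,T)},\gamma_T(x)>0,A_+(x)]=b(s,x,T)$, which after multiplication by $c/(c-i\alpha)$ is \eqref{eq:3.5b}.

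The main obstacle is the lack-of-memory step: verifying that inserting the lower barrier $x-T$ does not perturb the exponential-$c$ law of the upward overshoot, so that the $\alpha$-dependence of the two-sided functional stays confined to the single factor $c/(c-i\alpha)$. Once this is established the remainder is bookkeeping — the linear system for $(C_1(T,\alpha),C_0(T,\alpha))$ automatically inherits the affine structure because its coefficient matrix is $\alpha$-free — and one need only note, as in the proof of Theorem~\ref{thm:1}, that the prelimit convergences carry over to the joint functional $\{\tau(x,T),\gamma_T(x)\}$, which they do since it belongs to the class of almost surely continuous Skorokhod functionals described in Section~1.
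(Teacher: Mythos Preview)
Your argument is correct and follows the paper's own route: both proofs amount to splitting the matrix identity displayed just before the theorem into its $\alpha$-free part and its $c/(c-i\alpha)$ part, and identifying these with the events $\{\gamma_T(x)=0\}$ and $\{\gamma_T(x)>0\}$.

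The only difference is the order of the logic. You first argue probabilistically that the upward overshoot in the two-sided problem is conditionally exponential$(c)$, and then match coefficients. The paper does the reverse: it simply reads off the two pieces of the matrix identity as \eqref{eq:3.5a} and \eqref{eq:3.5b} and \emph{afterwards} remarks that these formulas imply conditional independence and memorylessness. The paper's shortcut is legitimate because the contribution of $\{\gamma_T(x)>0\}$ to the left side of \eqref{eq:3.4} is the Fourier transform in $\alpha$ of an absolutely continuous sub-probability on $(0,\infty)$ and hence vanishes as $|\alpha|\to\infty$; this alone forces the constant part of the right side to equal the $\{\gamma_T(x)=0\}$ term, with no need to know the exponential law in advance. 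Your route is slightly longer but perfectly valid. (One small wording issue: your phrase ``$\sigma>0$ guarantees there is no atom on the upper boundary'' reads backwards --- the diffusive/drift component is precisely what produces the atom $\{\gamma_T(x)=0\}$; presumably you meant that the \emph{jump} overshoot has no atom at zero.)
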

Formulas (\ref{eq:3.5a}) -- (\ref{eq:3.5b}) imply that the exit
time and the corresponding overshoot are conditionally independent
and the overshoot is conditionally memoryless given that $\left\{ \gamma_{T}\left(x\right)>0\right\} $.

\subsection{Density function of the process before exit from the interval}
To find the m.g.f. for the process before the exit from the interval use
the Pecherskii identity (see~\cite[Th. 4.3]{Husak2011engl}):
\begin{multline}
\mathsf{E}\left[e^{i\alpha\xi\left(\theta_{s}\right)},\tau\left(x,T\right)>\theta{}_{s}\right]=\\
=\mathsf{E}e^{i\alpha\xi^{+}(\theta_{s})}\left[\mathsf{E}e^{i\alpha\xi^{-}(\theta_{s})}\left(1-e^{i\alpha x}\mathsf{E}\left[e^{-s\tau\left(x,T\right)+i\alpha\gamma_{T}\left(x\right)},A_{+}\left(x\right)\right]\right)\right]_{\left[x-T,\infty\right)}=\\
=\mathsf{E}e^{i\alpha\xi^{+}(\theta_{s})}\left[\mathsf{E}e^{i\alpha\xi^{-}(\theta_{s})}\right]_{\left[x-T,\infty\right)}-\\
-\mathsf{E}e^{i\alpha\xi^{+}(\theta_{s})}\left[\mathsf{E}e^{i\alpha\xi^{-}(\theta_{s})}e^{i\alpha x}\mathsf{E}\left[e^{-s\tau\left(x,T\right)},\gamma_{T}\left(x\right)=0,A_{+}\left(x\right)\right]\right]_{\left[x-T,\infty\right)}-\\
-\mathsf{E}e^{i\alpha\xi^{+}(\theta_{s})}\left[\mathsf{E}e^{i\alpha\xi^{-}(\theta_{s})}e^{i\alpha x}\mathsf{E}\left[e^{-s\tau\left(x,T\right)+i\alpha\gamma_{T}\left(x\right)},\gamma_{T}\left(x\right)>0,A_{+}\left(x\right)\right]\right]_{\left[x-T,\infty\right)}.\label{eq:3.9}
\end{multline}
For the first term in (\ref{eq:3.9}) we have
$$
\mathsf{E}e^{i\alpha\xi^{+}(\theta_{s})}\left[\mathsf{E}e^{i\alpha\xi^{-}(\theta_{s})}\right]_{\left[x-T,\infty\right)}=
\int_{x-T}^{\infty}e^{i\alpha z}\int_{x-T}^{\min\left\{ 0,z\right\} }P'_{+}\left(s,z-y\right)P'_{-}\left(s,y\right)dydz,
$$
for the second
\begin{multline*}
\mathsf{E}e^{i\alpha\xi^{+}(\theta_{s})}\left[\mathsf{E}e^{i\alpha\xi^{-}(\theta_{s})}e^{i\alpha x}\mathsf{E}\left[e^{-s\tau\left(x,T\right)},\gamma_{T}\left(x\right)=0,A_{+}\left(x\right)\right]\right]_{\left[x-T,\infty\right)}=\\
=\int_{x-T}^{\infty}e^{i\alpha z}\int_{x-T}^{\min\left\{ z,x\right\} }P'_{+}\left(s,z-y\right)P'_{-}\left(s,y\right)dy\mathsf{E}\left[e^{-s\tau\left(x,T\right)},\gamma_{T}\left(x\right)=0,A_{+}\left(x\right)\right],
\end{multline*}
and for the third we find that
\begin{multline*}
\mathsf{E}e^{i\alpha\xi^{+}(\theta_{s})}\left[\mathsf{E}e^{i\alpha\xi^{-}(\theta_{s})}e^{i\alpha x}\mathsf{E}\left[e^{-s\tau\left(x,T\right)+i\alpha\gamma_{T}\left(x\right)},\gamma_{T}\left(x\right)>0,A_{+}\left(x\right)\right]\right]_{\left[x-T,\infty\right)}=\\
=\int_{x-T}^{\infty}e^{i\alpha z}\int_{x-T}^{z}P'_{+}\left(s,z-v\right)\int_{-\infty}^{\min\left\{ x,v\right\} }ce^{-c\left(v-y\right)}P'_{-}\left(s,y\right)dydvdz\times\\
\times\mathsf{E}\left[e^{-s\tau\left(x,T\right)},\gamma_{T}\left(x\right)>0,A_{+}\left(x\right)\right].
\end{multline*}
Hence, inverting (\ref{eq:3.9}) with respect to $\alpha$ we can deduce
the next statement.
\begin{thm}
\label{thm:3} The density function of killed Kou process until the
exit time from $\left(x-T,x\right)$ has the representation
\begin{multline}
h_{s}\left(T,x,z\right)=\frac{\partial}{\partial z}P\left\{ \xi\left(\theta_{s}\right)<z,\tau\left(x,T\right)>\theta_{s}\right\} =\int_{x-T}^{\min\left\{ 0,z\right\} }P'_{+}\left(s,z-y\right)P'_{-}\left(s,y\right)dy-\\
-\int_{x-T}^{\min\left\{ z,x\right\} }P'_{+}\left(s,z-y\right)P'_{-}\left(s,y\right)dy\mathsf{E}\left[e^{-s\tau\left(x,T\right)},\gamma_{T}\left(x\right)=0,A_{+}\left(x\right)\right]-\\
-\int_{x-T}^{z}P'_{+}\left(s,z-v\right)\int_{-\infty}^{\min\left\{ x,v\right\} }ce^{-c\left(v-y\right)}P'_{-}\left(s,y\right)dydv\mathsf{E}\left[e^{-s\tau\left(x,T\right)},\gamma_{T}\left(x\right)>0,A_{+}\left(x\right)\right].\label{eq:3.6}
\end{multline}
\end{thm}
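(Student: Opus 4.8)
The proof consists in inverting, with respect to $\alpha$, the Pecherskii identity (\ref{eq:3.9}). No prelimit/limiting argument is needed here: the identity \cite[Th.~4.3]{Husak2011engl} applies directly to the Kou process $\xi$ killed at rate $s$. The plan is thus to (i) establish (\ref{eq:3.9}) and split its right-hand side into three explicitly computable pieces, (ii) evaluate the Fourier transform in $\alpha$ of each piece, and (iii) read off $h_{s}(T,x,z)$ by uniqueness of the transform.

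For (i), Pecherskii's identity combines the Wiener--Hopf factorization $\mathsf{E}e^{i\alpha\xi(\theta_{s})}=\mathsf{E}e^{i\alpha\xi^{+}(\theta_{s})}\,\mathsf{E}e^{i\alpha\xi^{-}(\theta_{s})}$ with the projection $[\cdot]_{[x-T,\infty)}$ and the joint transform $\mathsf{E}[e^{-s\tau(x,T)+i\alpha\gamma_{T}(x)},A_{+}(x)]$ to give the first line of (\ref{eq:3.9}); expanding $1-e^{i\alpha x}\mathsf{E}[e^{-s\tau(x,T)+i\alpha\gamma_{T}(x)},A_{+}(x)]$ by linearity and by the split $\{\gamma_{T}(x)=0\}$, $\{\gamma_{T}(x)>0\}$ --- where, by the conditional memorylessness of Theorem \ref{thm:2}, the second part factors as $\tfrac{c}{c-i\alpha}\,\mathsf{E}[e^{-s\tau(x,T)},\gamma_{T}(x)>0,A_{+}(x)]$ --- yields the three summands. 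For (ii), in each summand I substitute the exponential-sum densities $P'_{\pm}(s,\cdot)$ of Lemma \ref{lem:1-1} into $\mathsf{E}e^{i\alpha\xi^{\pm}(\theta_{s})}$ (both factors are free of an atom at $0$ because $\sigma>0$), observe that $[\cdot]_{[x-T,\infty)}$ merely truncates the pertinent spatial integral at the level $x-T$ while the scalar factors $\mathsf{E}[e^{-s\tau(x,T)},\gamma_{T}(x)=0,A_{+}(x)]$ and $\mathsf{E}[e^{-s\tau(x,T)},\gamma_{T}(x)>0,A_{+}(x)]$ pass through it, and recognize the remaining product of transforms as the transform of a convolution on $[x-T,\infty)$ --- a double convolution in the first two terms, a triple one in the $\{\gamma_{T}(x)>0\}$ term, the extra factor being the transform of $ce^{-cy}I_{\{y>0\}}$. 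Reorganizing the resulting iterated integrals produces exactly the three expressions $\int_{x-T}^{\infty}e^{i\alpha z}(\cdots)\,dz$ displayed before the theorem, the inner limits $\min\{0,z\}$, $\min\{z,x\}$, $z$ and the kernel $ce^{-c(v-y)}$ being forced by the convolution constraints.

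For (iii), since $\sigma>0$ the transition densities of $\xi$ are smooth, so the sub-probability law of $\xi(\theta_{s})$ on $\{\tau(x,T)>\theta_{s}\}$ is absolutely continuous with some density $h_{s}(T,x,\cdot)$, supported in $(x-T,x)$ because on that event the path has not left the open interval; hence the left-hand side of (\ref{eq:3.9}) equals $\int_{x-T}^{\infty}e^{i\alpha z}h_{s}(T,x,z)\,dz$. Equating this with the sum of the three transforms and using uniqueness of the Fourier transform gives (\ref{eq:3.6}). The step requiring the most care is the convolution bookkeeping in the second and, above all, the third summand --- tracking how the truncation level $x-T$ interacts with two successive convolutions and the exponential kernel so as to produce precisely the inner limit $\min\{z,x\}$ and the factor $ce^{-c(v-y)}$. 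The only genuinely analytic (rather than bookkeeping) point is the legitimacy of inverting termwise, which reduces to the absolute continuity just noted; as a consistency check one then gets for free that the three integrands in (\ref{eq:3.6}) cancel for $z\ge x$, as they must.
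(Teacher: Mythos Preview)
Your proposal is correct and follows essentially the same route as the paper: invoke the Pecherskii identity (\ref{eq:3.9}), split the right-hand side into the three pieces according to $\{\gamma_{T}(x)=0\}$ and $\{\gamma_{T}(x)>0\}$ (using the conditional memorylessness from Theorem~\ref{thm:2}), rewrite each piece as $\int_{x-T}^{\infty}e^{i\alpha z}(\cdots)\,dz$ via the convolution structure and the explicit densities of Lemma~\ref{lem:1-1}, and then invert in $\alpha$. The paper's proof is just the three displayed computations preceding the theorem followed by the sentence ``inverting (\ref{eq:3.9}) with respect to $\alpha$''; your write-up supplies a bit more analytic justification (absolute continuity from $\sigma>0$, uniqueness of the Fourier transform) but is otherwise the same argument.
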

Density $h_{s}\left(T,x,y\right)$ determines the joint distribution
of $\left\{ \xi^{-}\left(\theta_{s}\right),\xi\left(\theta_{s}\right),\xi^{+}\left(\theta_{s}\right)\right\} $
(for details see \cite{Kadankov2005engl}).

\noindent{\bf{Example}.} We consider the two sets of parameters (see Tabl.~\ref{tabl1}).
The main difference is that $m=\mathsf{E}\xi(1)$ have different signs:
for the first case $m<0$, and for the second $m>0$. The sign of
the expectation determines the asymptotic behavior of the roots of
cumulant equation.\par From the cumulant equation
\[
ar+r^{2}\frac{\sigma^{2}}{2}+\lambda r\left(\frac{p}{c-r}-\frac{q}{b+r}\right)=s
\]
we find the approximated values of roots, then using~(\ref{eq:2.5a})--~(\ref{eq:2.5b})
 we obtain the densities of killed extrema (see Tabl.~\ref{tabl1}).
\begin{table}[ht]\label{tabl1}
\centering
\begin{tabular}{|l|l|}
\hline
 & Case 1)\tabularnewline
\hline
Parameters  & $a=-3.05,p=0.75,\lambda=4,c=2,b=8,\sigma=0.5,s=4/45$\tabularnewline
Roots & $r_{2}\approx8.2084,r_{1}\approx0.0515,\rho_{1}\approx1.0000,\rho_{2}\approx13.4599$\tabularnewline
Densities  & $P'_{-}(s,x)\approx0.0515e^{0.0515x}+0.0014e^{8.2084x},x<0$\tabularnewline
 & $P'_{+}(s,x)\approx6.1898e^{-13.4599x}+0.5401e^{-1.0000x},x>0$ \tabularnewline
\hline
 & Case 2)\tabularnewline
\hline
Parameters  & $a=1,p=0.2,\lambda=6,c=2,b=8,\sigma=2,s=1$\tabularnewline
Roots & $r_{2}\approx8.6470,r_{1}\approx1.3654,\rho_{1}\approx0.5504,\rho_{2}\approx2.4621$\tabularnewline
Densities  & $P'_{-}(s,x)=1.3447e^{1.3654x}+0.1311e^{8.6470x},x<0$\tabularnewline
 & $P'_{+}(s,x)=0.1638e^{-2.4621x}+0.5138e^{-0.5504x},x>0$\tabularnewline
\hline
\end{tabular}
\caption{Density functions of killed extrema}
\end{table}

Substitution these densities in~(\ref{eq:3.3}) --
(\ref{eq:3.3a}) yields the representations for $Q_{T}\left(s,x\right)$,
$Q^{T}\left(s,x\right)$ and their sum $Q\left(s,x,T\right)=\mathsf{E}e^{-s\tau\left(x,T\right)}$
(see Fig. \ref{pic.QTtsx}). Due to complexity we omit the corresponding
expressions.
\begin{figure}[ht]
\centering \includegraphics[width=13cm]{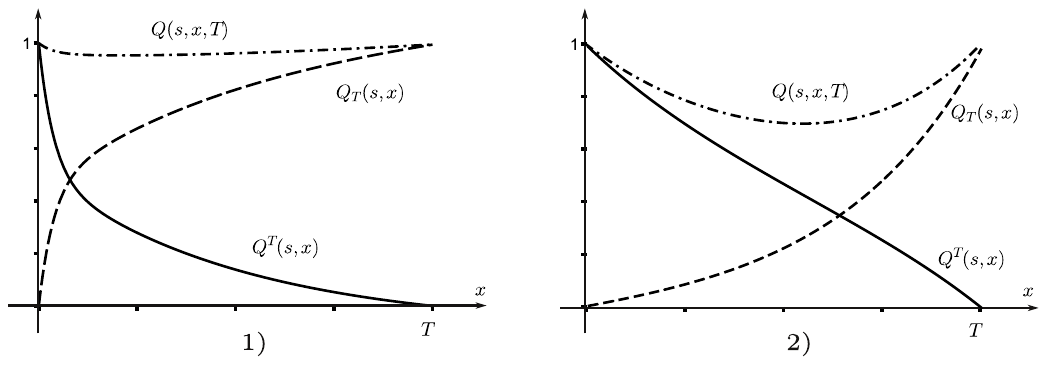}\caption{M.g.f. for the exit time from $\left(x-T,x\right)$
}\label{pic.QTtsx}
\end{figure}

\small


\begin{thebibliography}{10}
%
\bibitem{Kou2003} {S.G. Kou,
} \textit{First passage times of a jump diffusion process}, Adv. Appl. Prob.
\textbf{35} (2003), 504--531.
%
\bibitem{Husak2011engl} D. Husak, \textit{Processes with Independent
Increments in Risk Theory}, Institute of Mathematics of the NAS of
Ukraine, Kyiv, 2011 (in Ukrainian).
 %
\bibitem{Kadankov2005engl} {V. Kadankov and T. Kadankova},
\textit{On the distribution of the first exit time an interval and the value
	of overshoot through the borders for processes with independent increments
	and random walks},
Ukr. Math. J. \textbf{57} (2005), no.~10, 1359--1384.
%
\bibitem{Bratiychuk1990} N. Bratiychuk and D. Husak, \textit{Boundary-Values
Problems for Processes with Independent Increments}, Naukova Dumka,
Kyiv, 1990 (in Russian).
%
\bibitem{Bratijchuk1984engl} {N. Bratijchuk},
  \textit{Construction of the resolvent of a process with independent increments,
	terminating at the instant of exit from an interval},
 Theory Probab. Math. Stat. \textbf{30} (1985), 25--33.
 %
\bibitem{Gihman1975}I. Gihman and A. Skorohod, \textit{The theory of stochastic processes II},
 Springer-Verlag, N.--Y., 1975.
\end{thebibliography}
\end{document}